\newtheorem{Thm}{Theorem}
\newtheorem{Lem}[Thm]{Lemma}
\newtheorem{Prop}[Thm]{Proposition}
\newtheorem{Cor}[Thm]{Corollary}
\theoremstyle{definition}
\newtheorem{Def}[Thm]{Definition}
\theoremstyle{remark}
\newtheorem{Rem}[Thm]{Remark}
\numberwithin{equation}{section}
\def\CC{{\mathbb{C}}}
\def\RR{{\mathbb{R}}}
\def\ZZ{{\mathbb{Z}}}
\def\NN{{\mathbb{N}}}
\def\PP{{\mathbb{P}}}
\def\PP{{\mathbb{P}}}
\title[Bounding the number of remarkable values via Jouanolou's theorem]{Bounding the number of remarkable values via Jouanolou's theorem}
\date{\today}
\author[G.~Ch\`eze]{Guillaume Ch\`eze}
\address{Institut de Math\'ematiques de Toulouse\\
Universit\'e Paul Sabatier Toulouse 3 \\
MIP B\^at 1R3\\
31 062 TOULOUSE cedex 9, France}
\email{guillaume.cheze@math.univ-toulouse.fr}
\begin{document}
	
\begin{abstract}
In this article we bound the number of remarkable values of a polynomial vector field. The proof is short and based on  Jouanolou's theorem about rational first integrals of planar polynomial derivations. Our bound is given in term of the size of a Newton polygon associated to the vector field.  We prove that this bound is almost reached.
\end{abstract}	
	
\maketitle

\section*{Introduction}
In this paper we study polynomial differential systems  in $\CC^2$ of this form:
$$\dfrac{dX}{dt}=A(X,Y), \, \, \, \dfrac{dY}{dt}=B(X,Y),$$
where $A, B \in \CC[X,Y]$ are coprime and $\deg (A), \deg(B) \leq k$.\\
 We associate  to these polynomial differential systems the polynomial derivations\\ $D=A(X,Y)\partial_{X}+B(X,Y)\partial_Y$. \\
 
 The computation and the study of  rational first integrals of such  polynomial differential systems is an old and classical problem. We recall that a rational first integral is a rational function $f/g\in  \CC(X,Y)$ such that the curves $\lambda f-\mu g=0$, where $(\lambda:\mu) \in \PP^1(\CC)$, give  orbits of the differential system. Thus it is  a function $f/g \in \CC(X,Y)$ such that $D(f/g)=0$.\\
 
 When we study rational first integrals, we can always consider a rational first integral $f/g$ with a minimum degree. We recall that the degree of $f/g$ is equal to the maximum of $\deg(f)$ and $\deg(g)$. \\
  Rational first integrals with minimum degree are \emph{indecomposable} rational functions.  We recall that a rational function is \emph{decomposable} when it can be written $u(h)$ where $u\in \CC(T)$, $h \in \CC(X,Y)$ and $\deg(u) \geq 2$, otherwise $f/g$ is said to be \emph{indecomposable}.  Sometimes indecomposable rational functions are called minimal rational functions.\\
    In 1891, Poincar\'e has shown that if we consider an indecomposable rational first integral  $f/g$, then the number of $(\lambda:\mu) \in \PP^1(\CC)$ such that  $\lambda f -\mu g$ is reducible in $\CC[X,Y]$ or $\deg(\lambda f - \mu g) <\deg(f/g)$, is finite. Poincar\'e called these kind of values  $(\lambda:\mu)$ : \emph{remarkable values}. Poincar\'e was interested by the intersection of  different level sets of a given rational first integral. He has also shown how to use remarkable values in order to study the inverse integrating factor, see \cite{Poincare}. Recently, new results have been given in this direction, see \cite{chavarriga_giac_gine_llibre,FerragutLlibre,FerragutLlibreMahdi,CollFerragutLlibre,Ferragutnew}. In \cite{GarciaLlibreRio,GarciaGiacRio}, remarkable values  are used to study systems with polynomial first integrals. In \cite{AlvarezFerragut} they are used to study  degenerate singular points. 
 They also plays a role in an algorithm computing the decomposition of multivariate rational functions, see \cite{Chezedecomp}, and in another algorithm computing rational first integrals with bounded degree, see \cite{BCCW}.\\
 
Poincar\'e has shown, when all the singular points of the polynomial vector field are distinct, that the number of remarkable values is bounded by the number of saddle points plus 2, see \cite{Poincare}. Then a direct application of Bezout's theorem shows  that the number of remarkable values is bounded by $k^2+2$.\\

 Since Poincar\'e, a lot of authors, see below, have given bounds about the number of remarkable values and have studied the problem of reducibility in a pencil of algebraic curves. To the best of our knowledge all these bounds are given in term of the degree $d$ of the rational function $f/g$. Even if there exists a relation between $k$ and $d$, see below, this means that a geometric point of view about the pencil $\lambda f - \mu g$ is generally used. In this note, we consider the pencil $\lambda f - \mu g$ as level sets of a rational first integrals. With this point of view we can use results about rational first integrals and get easily a new bound about the number of remarkable values. Furthermore, our strategy allows us to give a bound on the  \emph{total order of reducibility}. This number is defined in the following way:\\ 
We denote by $f^h$ and $g^h$ the homogeneous polynomial of degree $\deg(f/g)$ in $\CC[X,Y,Z]$ associated to $f$ and $g$ and we set:
$$ \sigma(f,g)= \{(\lambda:\mu) \in \PP^1(\CC) \mid \lambda f^h - \mu g^h \mbox{ is reducible in } \CC[X,Y,Z]\}.$$
This set is called the \emph{spectrum} of $f/g$.\\ 
We have introduced the homogeneous polynomial $f^h$ and $g^h$ in order to have a uniform definition for the remarkable values. The situation where $\deg( \lambda f - \mu g)$ is smaller than $\deg(f/g)$ corresponds to the situation where $Z$ divides $\lambda f^h - \mu g^h$. Thus, by definition of the spectrum,  \emph{the spectrum is the set of all remarkable values.}\\

 If $(\lambda:\mu) \in \sigma(f,g)$  then  we have 
   $$\lambda f^h -\mu g^h= \prod_{i=1}^{n(\lambda:\mu)}\big(f_{(\lambda:\mu),i}^h\big)^{e_{(\lambda:\mu),i}},$$
   where $f_{(\lambda:\mu),i}^h$ is the homogeneous polynomial in $\CC[X,Y,Z]$ associated to the irreducible factor $f_{(\lambda:\mu),i}$ of $\lambda f - \mu g$.
The \emph{total order of reducibility} is 
$$\rho(f,g)=\sum_{(\lambda:\mu) \in \sigma(f,g)}\big( n(\lambda:\mu) -1\big).$$

For example, if $f=Y$ and $g=X^2$ then $\sigma(f,g)= \{(0:1), (1:0)\}$. Indeed, $X^2$ is reducible and $1=\deg(f)<\deg(f/g)=2$. Moreover, $f^h=Y.Z$, then $n(1:0)=2$ and $g^h=x^2$ then $n(0:1)=1$. Thus $\rho(f,g)=(2-1) +(1-1)=1$ and with this example we have: $\rho(f,g) <|\sigma(f,g)|.$ \\
We get this inequality because $X^2$ is reducible and has not distinct irreducible factors. Thus if we want to bound the number of remarkable values with $\rho(f,g)$ we have to also add  the number of $(\lambda:\mu) \in \PP^1(\CC)$ such that $\lambda f^f-\mu g^h$ is a pure power. Then we introduce:

$$ \gamma(f,g)= \{(\lambda:\mu) \in \PP^1(\CC) \mid \lambda f^h - \mu g^h =P^e \mbox{ where  }  e>1  \mbox{ and  } P \in \CC[X,Y,Z]\}.$$

Furthemore the number of elements in $\gamma(f,g)$ is smaller or equal to 3, see \cite{Joua_Pfaff,BuseCheze,Abhyankar} for a proof and \cite[Remark 8]{Abhyankar} for an example where the bound is reached.

\begin{Rem}\label{rem1}
We have:
$$ |\sigma(f,g)| \leq \rho(f,g)+|\gamma(f,g)|\leq \rho(f,g)+3.$$
\end{Rem}
Thus we are going to bound $\rho(f,g)$ and it will give a bound on the number of remarkable values.\\

As our theorem uses Newton polygons, we recall that the Newton polygon of a Laurent polynomial $f(X,Y)=\sum_{\alpha,\beta} c_{\alpha,\beta}X^{\alpha}Y^{\beta}$, where  $(\alpha,\beta) \in \ZZ^2$, is the convex hull in $\RR^2$ of the exponents $(\alpha,\beta)$ of all  nonzero terms of $f$.\\
 
 \begin{Thm}\label{Thm}
Let $D=A(X,Y)\partial_{X}+B(X,Y)\partial_Y$ be a derivation, such that\\ $\deg(A), \deg(B) \leq k$, and let $f/g \in \CC(X,Y)$ be   an indecomposable rational function which is a first integral of $D$.\\  Consider a generic point $(x,y)$ in $\CC^2$ and  the Newton polygon $N_D$ associated to the Laurent polynomial $x\dfrac{A(X,Y)}{X}+y\dfrac{B(X,Y)}{Y}$, and denote by $\mathcal{B}$  the number of integer points in $N_D\cap \NN^2$, then:
$$\rho(f,g)< \mathcal{B}+2.$$
\end{Thm}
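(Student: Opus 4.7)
The plan is to proceed in three steps: unpack the cofactor structure of the Darboux polynomials arising from remarkable fibers, locate these cofactors inside a small $\CC$-vector space prescribed by $N_D$, and apply Jouanolou's theorem to bound the number of independent cofactor relations.

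\emph{Cofactor setup.} Since $\gcd(f,g)=1$ and $D(f/g)=0$, there is a common cofactor $\Lambda\in\CC[X,Y]$ with $D(f)=\Lambda f$ and $D(g)=\Lambda g$. For each $(\lambda:\mu)\in\sigma(f,g)$, each factor $f_{(\lambda:\mu),i}$ of $\lambda f-\mu g$ (together, projectively, with $Z$ whenever the fiber's degree drops) is an invariant algebraic curve of $D$ with cofactor $\Lambda_{(\lambda:\mu),i}$; summing logarithmic derivatives in the decomposition $\lambda f^h-\mu g^h=\prod_i (f_{(\lambda:\mu),i}^h)^{e_{(\lambda:\mu),i}}$ yields the identity $\sum_i e_{(\lambda:\mu),i}\,\Lambda_{(\lambda:\mu),i}=\Lambda$. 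Factors from distinct remarkable values are coprime (a common factor would divide both $f$ and $g$), so these irreducible Darboux polynomials are pairwise distinct; their total number is $N:=\rho(f,g)+|\sigma(f,g)|$.

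\emph{Newton polygon lemma.} The central new input is that for every Darboux polynomial $h$ of $D$ with cofactor $\Lambda_h$, one has $N_{\Lambda_h}\subseteq N_D$. Starting from $\Lambda_h\cdot h=A\,\partial_X h+B\,\partial_Y h$, a support computation gives $\mathrm{supp}(\Lambda_h h)\subseteq\bigl((\mathrm{supp}\,A-(1,0))\cup(\mathrm{supp}\,B-(0,1))\bigr)+\mathrm{supp}(h)$; taking convex hulls, $N_{\Lambda_h h}\subseteq N_D+N_h$. Since Newton polygons are Minkowski-multiplicative ($N_{pq}=N_p+N_q$), this rewrites as $N_{\Lambda_h}+N_h\subseteq N_D+N_h$, and Minkowski cancellation for compact convex bodies (via support functions) yields $N_{\Lambda_h}\subseteq N_D$. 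Consequently each cofactor lies in the $\mathcal{B}$-dimensional $\CC$-vector space $V_D:=\mathrm{span}_{\CC}\{X^\alpha Y^\beta:(\alpha,\beta)\in N_D\cap\NN^2\}$.

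\emph{Counting via Jouanolou.} The $N$ cofactors $\Lambda_{(\lambda:\mu),i}$ all lie in the $\mathcal{B}$-dimensional $V_D$, so they satisfy at least $N-\mathcal{B}$ independent $\CC$-linear relations. Subtracting the pencil identities $\sum_i e_{(\lambda:\mu),i}\Lambda_{(\lambda:\mu),i}=\Lambda$ across pairs of remarkable values produces $|\sigma(f,g)|-1$ manifestly independent relations. Jouanolou's theorem controls the rest: every further $\CC$-linear relation $\sum c_{(\lambda:\mu),i}\Lambda_{(\lambda:\mu),i}=0$ encodes a closed $D$-invariant logarithmic $1$-form, which by indecomposability of $f/g$ must be expressible, up to the standard two-dimensional ``trivial'' contribution of Jouanolou-type bounds, as a $\CC$-combination of pencil $1$-forms. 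This gives $N-\mathcal{B}\leq(|\sigma(f,g)|-1)+2$, hence $\rho(f,g)=N-|\sigma(f,g)|\leq\mathcal{B}+1<\mathcal{B}+2$. The main obstacle is extracting the precise quantitative form of Jouanolou's theorem that delivers the constant ``$+2$'' while correctly attributing $|\sigma(f,g)|-1$ relations to the existing rational first integral $f/g$; the Newton polygon lemma itself is a clean Minkowski argument, and the pencil-relation bookkeeping is combinatorial.
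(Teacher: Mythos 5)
Your first two steps are sound: the common cofactor $\Lambda$ with $D(f)=\Lambda f$, $D(g)=\Lambda g$, the identities $\sum_i e_{(\lambda:\mu),i}\Lambda_{(\lambda:\mu),i}=\Lambda$, and the Minkowski-cancellation argument showing $N_{\Lambda_h}\subseteq N_D$ (hence that every cofactor lies in the $\mathcal{B}$-dimensional space $V_D$) are all correct; the latter is essentially the key ingredient of the sparse Jouanolou theorem of \cite{ChezeSparse}, which the paper simply invokes as a black box. The genuine gap is your third step. The inequality $N-\mathcal{B}\leq(|\sigma(f,g)|-1)+2$ amounts to the claim that the space $R=\{c\in\CC^N:\sum c_{(\lambda:\mu),i}\Lambda_{(\lambda:\mu),i}=0\}$ of \emph{complex} linear relations among the cofactors has dimension at most $|\sigma(f,g)|+1$, and nothing you write proves this. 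Jouanolou's theorem, in any of its forms, does not bound the dimension of the space of closed $D$-invariant logarithmic $1$-forms; it only asserts that sufficiently many invariant curves force the existence of \emph{one} rational first integral. A relation $c\in R$ with non-rational coefficients gives a Darboux function $\prod f_i^{c_i}$, not an element of $\ker D=\CC(f/g)$, so you cannot invoke indecomposability of $f/g$ to force $c$ into the span of pencil relations; descending from $\CC$-relations to $\QQ$-relations (at the cost of a bounded correction) is precisely the hard content of Jouanolou's proof, and ``up to the standard two-dimensional trivial contribution'' is an assertion, not an argument. You acknowledge this yourself in your closing sentence.

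Moreover, even for an \emph{integer} relation, you never carry out the step that actually produces the contradiction: if $\prod f_i^{c_i}\in\ker D=\CC(f/g)$, then writing it as $u(f/g)=\prod_l(\alpha_lf-\beta_lg)\big/\prod_m(\gamma_mf-\delta_mg)\cdot g^e$ and using unique factorization forces the divisor $\sum c_i[f_i]$ to be a $\ZZ$-combination of \emph{complete} fiber divisors. The paper's proof is built entirely around this point: it discards one irreducible factor from each remarkable fiber, applies the sparse Jouanolou theorem to the remaining $\rho(f,g)\geq\mathcal{B}+2$ curves to obtain a rational first integral supported on them, and derives a contradiction because the omitted factor must occur on one side of the resulting identity but not the other (the Stein--Bodin strategy). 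Your dimension count never isolates this mechanism, and without it the pencil relations cannot be shown to exhaust $R$ even over $\QQ$. To repair the proof along the paper's lines, replace your third step by: select the $\rho(f,g)$ curves obtained by omitting $f_{(\lambda_j:\mu_j),n(\lambda_j:\mu_j)}$ for each $j$, apply Theorem \ref{cheze-sparse}, identify the resulting first integral with $u(f/g)$ via Proposition \ref{prop:ker}, and conclude by unique factorization.
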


Theorem \ref{Thm} is well suited for "sparse" derivations i.e.  when some coefficients of $A$ and $B$ are equal to zero.  For example when we consider polynomials $A$ and $B$ of this form: $c_{e,e}X^eY^e+c_{e-1,e}X^{e-1}Y^e+c_{e,e-1}X^{e}Y^{e-1} +c_{0,0}$, where $e$ is an integer strictly bigger than $1$, then $\mathcal{B}=3e+2$, and $k=\deg(A)=\deg(B)=2e$.  Thus for such examples Theorem \ref{Thm} gives a linear bound in $e$. Then \emph{the bound is linear} in the degree of the derivation. \\The Newton polygon associated to these polynomials $A$ and $B$ is given in Figure \ref{fig1}. The Newton polygon $N_D$ associated to this example is given in Figure \ref{fig2}. In these figures, small points correspond to coefficients equal to zero.\\

\begin{figure}[!h]
\begin{center}
\setlength{\unitlength}{.4cm}
\begin{picture}(10,10)
\put(0.5,1){$0$}
\put(1,2){\vector(1,0){7}}
\put(1,1.5){\vector(0,1){7}}
\put(-0.5,8){$Y$}
\put(8.5,1.5){$X$}
\put(1,2){\circle*{.2}}
\put(2,3){\circle*{.2}}
\put(3,4){\circle*{.2}}
\put(4,4){\circle*{.2}}
\put(3,5){\circle*{.2}}
\put(3,2){\circle*{.1}}
\put(2,2){\circle*{.1}}
\put(4,2){\circle*{.1}}
\put(1,3){\circle*{.1}}
\put(1,4){\circle*{.1}}
\put(1,5){\circle*{.1}}
\put(1,6){\circle*{.1}}
\put(1,7){\circle*{.1}}
\put(2,4){\circle*{.1}}
\put(2,5){\circle*{.1}}
\put(2,6){\circle*{.1}}
\put(3,2){\circle*{.1}}
\put(3,3){\circle*{.1}}
\put(4,3){\circle*{.1}}
\put(5,3){\circle*{.1}}
\put(5,2){\circle*{.1}}
\put(6,2){\circle*{.1}}
\put(7,2){\circle*{.1}}
\put(6,3){\circle*{.1}}
\put(5,4){\circle*{.1}}
\put(4,5){\circle*{.2}}
\put(3,6){\circle*{.1}}
\put(2,7){\circle*{.1}}
\put(1,8){\circle*{.1}}
\thicklines\put(1,2){\line(3,2){3}}
\thicklines\put(1,2){\line(2,3){2}}
\thicklines\put(3,5){\line(1,0){1}}
\thicklines\put(4,5){\line(0,-1){1}}
\end{picture}
\caption{Newton polygon \mbox{$\mathcal{N}\big(c_{3,3}X^3Y^3+c_{2,3}X^{2}Y^3+c_{3,2}X^{3}Y^{2} +c_{0,0}\big)$}}
\label{fig1}
\end{center}
\end{figure}

\vspace{1cm}

\begin{figure}[!h]
\begin{center}
\setlength{\unitlength}{.4cm}
\begin{picture}(10,10)
\put(0.5,1){$0$}
\put(1,2){\vector(1,0){7}}
\put(1,1.5){\vector(0,1){7}}
\put(-0.5,8){$Y$}
\put(8.5,1.5){$X$}
\put(1,2){\circle*{.2}}
\put(2,3){\circle*{.2}}
\put(3,4){\circle*{.2}}
\put(4,4){\circle*{.2}}
\put(3,5){\circle*{.2}}
\put(3,2){\circle*{.1}}
\put(2,2){\circle*{.2}}
\put(4,2){\circle*{.1}}
\put(1,3){\circle*{.2}}
\put(1,4){\circle*{.1}}
\put(1,5){\circle*{.1}}
\put(1,6){\circle*{.1}}
\put(1,7){\circle*{.1}}
\put(2,4){\circle*{.2}}
\put(2,5){\circle*{.2}}
\put(2,6){\circle*{.1}}
\put(3,2){\circle*{.1}}
\put(3,3){\circle*{.2}}
\put(4,3){\circle*{.2}}
\put(5,3){\circle*{.1}}
\put(5,2){\circle*{.1}}
\put(6,2){\circle*{.1}}
\put(7,2){\circle*{.1}}
\put(6,3){\circle*{.1}}
\put(5,4){\circle*{.1}}
\put(4,5){\circle*{.1}}
\put(3,6){\circle*{.1}}
\put(2,7){\circle*{.1}}
\put(1,8){\circle*{.1}}
\thicklines\put(3,5){\line(1,-1){1}}
\thicklines\put(2,2){\line(2,1){2}}
\thicklines\put(1,3){\line(1,2){1}}
\thicklines\put(2,5){\line(1,0){1}}
\thicklines\put(4,4){\line(0,-1){1}}
\thicklines\put(1,2){\line(1,0){1}}
\thicklines\put(1,2){\line(0,1){1}}

\end{picture}
\caption{The associated Newton polygon $N_D$.}
\label{fig2}

\end{center}
\end{figure}
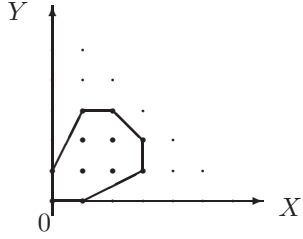

If we consider dense polynomials $A$, $B$ with degree $k$, that is to say each coefficient of $A$ and $B$ is nonzero, then $\mathcal{B}=k(k+1)/2$. We get then a \emph{quadratic bound} (in term of the degree $k$ of the derivation) on $\rho(f,g)$.\\

As the number of remarkable values is $|\sigma(f,g)|$ then by Remark \ref{rem1}  we get:
\begin{Cor}\label{Cormain}
With the previous notations, we have:\\
The number of remarkable values is smaller than $\mathcal{B}+2+|\gamma(f,g)|$, more precisely
$$|\sigma(f,g)| <\mathcal{B}+2+|\gamma(f,g)| \leq \mathcal{B}+5.$$
\end{Cor}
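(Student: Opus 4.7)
The plan is to obtain the corollary as an immediate consequence of Theorem \ref{Thm}, Remark \ref{rem1}, and the bound $|\gamma(f,g)|\leq 3$ that was recalled just before the remark. The hypotheses carry over verbatim: $D=A\partial_X+B\partial_Y$ with $\deg(A),\deg(B)\leq k$, and $f/g$ is an indecomposable rational first integral of $D$. In particular Theorem \ref{Thm} applies directly to this $f/g$ and yields the strict inequality
$$\rho(f,g) < \mathcal{B}+2.$$

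Next I would invoke Remark \ref{rem1}, which records the elementary inequality
$$|\sigma(f,g)| \leq \rho(f,g)+|\gamma(f,g)|.$$
Substituting the strict bound from Theorem \ref{Thm} into this inequality preserves strictness on the right-hand side and gives the first half of the stated chain,
$$|\sigma(f,g)| < \mathcal{B}+2+|\gamma(f,g)|.$$

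For the second half I would appeal to the uniform bound $|\gamma(f,g)|\leq 3$, which the paper has already recalled with references to \cite{Joua_Pfaff,BuseCheze,Abhyankar}, and which is valid precisely because $f/g$ is indecomposable (so that the generic fiber of $\lambda f^h-\mu g^h$ is irreducible). Adding this to the previous inequality yields
$$|\sigma(f,g)| < \mathcal{B}+2+|\gamma(f,g)| \leq \mathcal{B}+5,$$
which is exactly the statement of the corollary.

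There is essentially no obstacle here: the whole substance lies in Theorem \ref{Thm}, and the corollary is a formal packaging step. The only point worth checking is that the strict inequality from Theorem \ref{Thm} is indeed transmitted to $|\sigma(f,g)|$ when we add the nonnegative quantity $|\gamma(f,g)|$ to both sides, which is immediate. No separate case analysis for the value of $|\gamma(f,g)|\in\{0,1,2,3\}$ is required.
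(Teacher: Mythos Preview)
Your argument is correct and matches the paper's own derivation: the paper obtains the corollary immediately from Remark~\ref{rem1} combined with Theorem~\ref{Thm} and the bound $|\gamma(f,g)|\leq 3$, exactly as you do. There is nothing to add.
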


This bound is almost reached. Indeed, consider:
$$D=(X^k-1)\partial_X-(kX^{k-1}Y+1)\partial_Y$$
An indecomposable polynomial first integral of $D$ is $f(X,Y)=Y(X^k-1)+X$. The remarkable values associated to $D$ are $(1:\omega)$ where $\omega^k=1$ and $(0:1)$. Indeed, here the denominator $g$ is equal to $1$, thus $\deg(0f-1g)=\deg(1)<\deg(f/g)$.  We deduce that in this situation $k+1 \leq |\sigma(f,1)|$.
Furthermore, for this derivation we have $\mathcal{B}=k$,  then by Remark \ref{rem1}, this gives 
$$\mathcal{B}+1\leq |\sigma(f,1)| \leq \rho(f,1)+|\gamma(f,1)|.$$
Here $|\gamma(f,1)|=1$, because $0f^h-1g^h=Z^{k+1}$. The point $(0:1)$ is the only possible point in $\gamma(f,1)$. Indeed, if $(\lambda:\mu) \neq (0:1) $ and $(\lambda:\mu)\in \gamma(f,1)$ then by definition  we have  $\lambda f-\mu=P^e$. As this implies  $f$ decomposable, we deduce that $|\gamma(f,1)|=1$. Then Corollary \ref{Cormain}  gives
 $$\mathcal{B}+1\leq |\sigma(f,1)|<\mathcal{B}+3.$$
 Thus the bound is almost reached.\\
 We can also remarked that  in this example we have $\rho(f,1)= \mathcal{B}$, and Theorem \ref{Thm} give $\rho(f,1)\leq \mathcal{B}+1$.\\
 
At last, we  remark that we can always consider a rational function $f/g$ as a rational first integral. Indeed, $f/g$ is a rational first integral of the jacobian derivative
$$D_{f/g}=A_{f/g}\partial_X-B_{f/g}\partial_Y,$$
where $$A_{f/g}=\dfrac{\partial_Y(f)g-f\partial_Y(g)}{\gcd\big(\partial_Y(f)g-f\partial_Y(g),\partial_X(f)g-f\partial_X(g)\big)}$$
 and
  $$B_{f/g}=\dfrac{\partial_X(f)g-f\partial_X(g)}{\gcd\big(\partial_Y(f)g-f\partial_Y(g),\partial_X(f)g-f\partial_X(g)\big)}.$$

 Thus if we want to study the reducibility in the pencil $\lambda f - \mu g$ then we can always suppose that  $f/g$ is an indecomposable rational first integral of a derivation.\\
 \subsection*{Related results}
 The study of remarkable values corresponds to the study of the irreducibility in a pencil of algebraic plane curves. It is an old problem and it has been widely studied since Bertini, see \cite{Kleiman}. It seems that Bertini and Poincar\'e has proved independently the following result:
 \begin{Thm}\label{spectre-fini}
 The spectrum associated to an indecomposable rational function is finite.
 \end{Thm}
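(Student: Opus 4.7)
The plan is to derive Theorem~\ref{spectre-fini} as a formal consequence of the quantitative bound of Theorem~\ref{Thm}. Since Theorem~\ref{spectre-fini} is phrased without any derivation in sight, the first step is to manufacture one for which $f/g$ is an indecomposable rational first integral.

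First, I would attach to $f/g$ the jacobian derivation $D_{f/g}=A_{f/g}\partial_X-B_{f/g}\partial_Y$ constructed at the end of the introduction. From the explicit formulas for $A_{f/g}$ and $B_{f/g}$ one checks directly that $D_{f/g}(f/g)=0$, so $f/g$ is indeed an indecomposable rational first integral of $D_{f/g}$. Because $A_{f/g},B_{f/g}\in\CC[X,Y]$ are polynomials of degree bounded in terms of $\deg(f/g)$, the Newton polygon $N_{D_{f/g}}$ sits inside a bounded region of $\RR^2$, so the count $\mathcal{B}$ of integer points of $N_{D_{f/g}}\cap\NN^2$ is finite.

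Second, applying Theorem~\ref{Thm} to $D_{f/g}$ yields the finite bound $\rho(f,g)<\mathcal{B}+2$. Plugging this into Remark~\ref{rem1} together with the standing bound $|\gamma(f,g)|\leq 3$ gives
$$|\sigma(f,g)|\leq \rho(f,g)+|\gamma(f,g)|<\mathcal{B}+5<\infty,$$
which is precisely the content of Theorem~\ref{spectre-fini}.

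The main obstacle is therefore not this reduction but the proof of Theorem~\ref{Thm} itself, which constitutes the technical core of the paper and is where Jouanolou's theorem on rational first integrals of planar polynomial derivations must be brought to bear. Once Theorem~\ref{Thm} is in hand, the finiteness assertion of Theorem~\ref{spectre-fini} drops out as an essentially immediate corollary of the construction above, with the cardinality quantified by $\mathcal{B}+5$.
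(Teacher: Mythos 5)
The paper does not actually prove Theorem \ref{spectre-fini}: it appears in the ``Related results'' discussion as a classical theorem attributed to Bertini and Poincar\'e and is cited, not derived. Your reduction --- attach the jacobian derivation $D_{f/g}$, check $D_{f/g}(f/g)=0$ with $A_{f/g},B_{f/g}$ polynomials of bounded degree so that $\mathcal{B}$ is finite, then combine Theorem \ref{Thm} with Remark \ref{rem1} and $|\gamma(f,g)|\leq 3$ --- is exactly the route the end of the introduction invites (``we can always suppose that $f/g$ is an indecomposable rational first integral of a derivation''), and each individual step of the reduction is sound.

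However, as a proof of Theorem \ref{spectre-fini} inside this paper's logical architecture the argument is circular. The proof of Theorem \ref{Thm} rests on Proposition \ref{prop:ker}, whose proof invokes Remark \ref{remhomo}, and Remark \ref{remhomo} begins by choosing two values $(\lambda_1:\mu_1),(\lambda_2:\mu_2)\notin\sigma(h_1,h_2)$. The existence of even one such value --- i.e.\ the fact that the spectrum of an indecomposable rational function is not all of $\PP^1(\CC)$ --- is precisely the qualitative content of the Bertini--Poincar\'e theorem you are trying to prove, and the paper supplies no independent justification for it. So your argument in effect upgrades the assumption ``the complement of the spectrum contains at least two points'' to the quantitative bound $|\sigma(f,g)|<\mathcal{B}+5$; that upgrade has value, but it cannot stand alone as a proof of Theorem \ref{spectre-fini}. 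To close the gap you would need either an independent proof that the generic member of an indecomposable pencil is irreducible (classical second Bertini theorem), or a proof of Proposition \ref{prop:ker} that bypasses Remark \ref{remhomo} --- the paper itself hints that a L\"uroth-based argument is possible.
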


 To the best of our knowledge, the first author, after Poincar\'e, who has  given a bound on the spectrum  was Ruppert, in \cite{Ruppert}. He has shown that the number of remarkable values associated to an indecomposable rational function of degree $d$ is smaller than $d^2-1$. Ruppert's strategy is based on the computation of the first de Rham's cohomology group of the  complementary of a plane curve. This approach gives an effective method to compute the spectrum.\\
Lorenzini has also studied the spectrum of  a rational function in \cite{Lorenzini} and he has shown that $\rho(f,g) \leq d^2-1$.\\
In \cite{BuseCheze}, the authors have used Ruppert's approach and have shown that the bound $d^2-1$ still works if we take into account the multiplicities of the factors.\\ 
Theorem \ref{Thm} can be seen as a counterpart of this bound with multiplicities. Indeed, in \cite{Poincare}, Poincar\'e has introduced the remarkable factor
$$R(X,Y)=\prod_{(\lambda:\mu) \in \sigma(f,g)} \prod_{i=1}^{n(\lambda:\mu)}f_{(\lambda:\mu),i}^{e_{(\lambda:\mu),i}-1}$$
 and has also  given the following relation: 
$$\deg(f)+\deg(g)-1=k+\deg(R).$$
 For a proof of this equality, we can read \cite{FerragutLlibre}.\\
  Thus a bound in term of the degree of the rational function which takes into account the multiplicities of the irreducible factors is in the same vein than a bound in term of  the degree $k$ of the vector field. The contribution of this present paper is to give a short proof which gives a nearly optimal bound in the sparse case.\\ 
Stein, in \cite{Stein},  considers the polynomial case. He has shown that $\rho(f,1) \leq d$.  It seems that the word "spectrum" has been introduced by Stein. This expression is  also used by other authors. For example this expression is used in \cite{BuseCheze} because in this paper the set of remarkable values corresponds to the spectrum of a pencil of matrices.\\
The strategy used by Stein was the following: first construct a rational first integral with some factors $f_{(1:\mu),i}$ and then get a contradiction. The construction of the first integral was obtained with geometrical arguments specific to the polynomial case (i.e. $g=1$). This approach has been extended   to the rational case by Bodin in \cite{Bodin}. The bound obtained is  $\rho(f,g)\leq d^2+d$. In this note, we are going to use the same approach. However, the construction of the rational first integral will be a direct consequence of a theorem due to Jouanolou.  This allows to get a bound in term of  $k$,  the degree of the derivation, and not in term of $d$, the degree of the rational function.  Furthermore, our proof is direct and gives a nearly optimal result for sparse derivations.\\

Some authors have already used the Darboux theory of integrability in order to show that the spectrum is finite. Moulin-Ollagnier, in \cite{Ollagnierdecomp}, has  proved  the finiteness of the spectrum by studying the number of distinct cofactors of a derivation.  Moulin-Ollagnier calls the factor $f_{(\lambda:\mu),i}$ "small Darboux polynomials".\\
We can also mention the paper \cite{chavarriga_giac_gine_llibre}, where the authors show with a simple proof using Darboux theory of integrability that the spectrum is finite.\\
 Unfortunately, these approachs do not give  sharp bounds.\\

There exist many other papers about the spectrum, for more details read e.g. \cite{BodinDebesNajib2,BodinDebesNajib1,NajibActa,Najibnvar,BuseChezeNajib,Abhyankar,Kaliman,Vistoli}.

\subsection*{Structure of this paper}
 In Section  \ref{toolbox} we recall some results about invariant algebraic curves and Jouanolou's theorem. In Section \ref{sec:indecomp}  we recall some classical results used in the proof of Theorem \ref{Thm}. In Section \ref{sec:proof}, we prove Theorem \ref{Thm}.


\subsection*{Notations}
$\NN=\{0;1;2\ldots\}$ is the set of integer numbers.\\
$|S|$ is the number of elements in the set $S$.\\
In the following $k$ will denote the degree of the derivation and $d$ the degree of a rational first integral.\\
In this paper, when we consider a rational function $f/g$, we always suppose that $f$ and $g$ are coprime. Furthermore, we recall that $\deg(f/g)$ means $\max\{\deg(f),\deg(g)\}$.
\section{Invariant algebraic curves and Jouanolou's theorem}\label{toolbox}
 In 1878, G.~Darboux \cite{Darboux} has given a strategy to find first integrals. One of the tools developed by G.~Darboux is now called \emph{invariant algebraic curves} and it will be the main ingredient in our proof.\\
\begin{Def}
A polynomial $f$ is said to be an invariant algebraic curve associated to $D$,  if $D(f)=\Lambda.f$, where $\Lambda$ is a polynomial. The polynomial $\Lambda$ is called the cofactor of $f$. 
\end{Def}
There exist a lot of different names in the literature for invariant algebraic curves, for example we can find: Darboux polynomials, special integrals, eigenpolynomials, special polynomials, or second integrals. \\
A lot of properties of a polynomial differential system are related to invariant algebraic curves of the corresponding derivation $D$, see e.g. \cite{Goriely,Dumortier_Llibre_Artes}. \\

 G.~Darboux shows in \cite{Darboux} that \emph{if the derivation  $D$ has at least \mbox{$k(k+1)/2+1$} irreducible invariant algebraic curves then $D$ has a  first integral which can be expressed by means of these polynomials}. More precisely the first integral has the following form: $\prod_i f_i^{c_i}$ where $f_i$ are invariant algebraic curves and $c_i$ are complex numbers. This kind of integral is called nowadays a Darboux first integral. \\
If the all the $c_i$ belong to $\ZZ$ then we have a rational first integral. That is to say a first integral which belongs to  $\CC(X,Y)$. The relation between rational first integral and invariant algebraic curves is given in the following proposition:

\begin{Prop}\label{fact-inv}
 If $f/g$ is a rational first integral then for all $(\lambda:\mu) \in \PP^1(\CC)$, $\lambda f -\mu g$ is an invariant algebraic curve and this curve corresponds to the level set $f/g=\mu/\lambda$.\\
Furtermore,   the irreducible factors $f_{(\lambda:\mu),i}$ of $\lambda f -\mu g$ are also invariant algebraic curves.
\end{Prop}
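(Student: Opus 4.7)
The plan is to start from the defining identity $D(f/g)=0$, which by the quotient rule gives $D(f)\,g - f\,D(g)=0$, equivalently $D(f)\,g = f\,D(g)$. Since $\gcd(f,g)=1$, we have $f \mid D(f)\,g$ forces $f \mid D(f)$, and symmetrically $g \mid D(g)$. Writing $D(f) = \Lambda f$, the same identity implies $D(g)=\Lambda g$ with the \emph{same} polynomial cofactor $\Lambda$. Note also $\deg(\Lambda)\le k-1$ by a degree count. This step is the real content; everything else follows formally.

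Next, for any $(\lambda:\mu)\in\PP^1(\CC)$, linearity of the derivation gives
$$D(\lambda f - \mu g) = \lambda D(f) - \mu D(g) = \lambda \Lambda f - \mu \Lambda g = \Lambda\,(\lambda f - \mu g),$$
so $\lambda f - \mu g$ is an invariant algebraic curve with cofactor $\Lambda$. For the level-set interpretation, when $\lambda\neq 0$ the equation $\lambda f - \mu g = 0$ on points where $g\neq 0$ is equivalent to $f/g = \mu/\lambda$; the case $\lambda=0$ corresponds to the zero set of $g$, i.e.\ the ``level at infinity'' of $f/g$.

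Finally, I would show that each irreducible factor $f_{(\lambda:\mu),i}$ is itself invariant. Fix an irreducible factor $p := f_{(\lambda:\mu),i}$ of multiplicity $e := e_{(\lambda:\mu),i}$, and factor $\lambda f - \mu g = p^{e}\,H$ with $\gcd(p,H)=1$. Applying the Leibniz rule,
$$D(p^{e}H) = e\,p^{e-1}D(p)\,H + p^{e}D(H) = \Lambda\,p^{e}H,$$
so dividing by $p^{e-1}$ yields
$$e\,D(p)\,H = p\bigl(\Lambda H - D(H)\bigr).$$
Thus $p$ divides $e\,D(p)\,H$. Since $p$ is irreducible and coprime to $H$ (and $e\neq 0$ in $\CC$), we obtain $p \mid D(p)$, which exactly says that $p$ is an invariant algebraic curve.

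The argument is essentially formal, so there is no substantive obstacle; the only point requiring care is the coprimality argument in the first paragraph, which is what forces $D(f)/f$ to be a polynomial (and equal to $D(g)/g$) rather than merely a rational function.
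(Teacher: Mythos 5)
Your proof is correct and follows the same route as the paper: the first part is the direct computation from $D(f)g=fD(g)$ and coprimality, and your Leibniz-rule argument for the irreducible factors is exactly the standard fact the paper invokes by citation (e.g.\ Proposition 8.4 of Dumortier--Llibre--Art\'es). You simply write out in full the details the paper leaves to a ``straightforward computation'' and a reference.
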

\begin{proof}
The first part is a straightforward computation. The second part come from the fact that irreducible factors of an invariant algebraic curve are also invariant algebraic curves, see e.g. \cite[Proposition 8.4]{Dumortier_Llibre_Artes} or \cite[Proposition 2.5]{Goriely}.
\end{proof}

 As mentioned before we are going to use a theorem due to Jouanolou.
  This theorem  is the following, see \cite{Joua_Pfaff}:
 
 \begin{Thm}
 If a derivation has at least $k(k+1)/2+2$ irreducible invariant algebraic curves then  there exist integers $n_i \in \ZZ$ such that $\prod_i f_i^{n_i} \in \CC(X,Y)$ is a rational first integral.
 \end{Thm}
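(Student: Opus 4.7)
The plan is to combine a dimension count on cofactors with the rank-one structure of rational 1-forms annihilating $D$. Each cofactor $\Lambda_i = D(f_i)/f_i$ has degree at most $k-1$ and therefore lives in the $\CC$-vector space $\CC[X,Y]_{\leq k-1}$ of dimension $m := k(k+1)/2$. With $N \geq m+2$ cofactors in hand, the space of $\CC$-linear relations
\[
W = \Bigl\{(c_1, \ldots, c_N) \in \CC^N : \sum_i c_i \Lambda_i = 0\Bigr\}
\]
must have $\dim_\CC W \geq 2$. I would fix two $\CC$-linearly independent vectors $c^{(1)}, c^{(2)} \in W$ and attach to them the closed logarithmic 1-forms $\omega_j = \sum_i c_i^{(j)} df_i/f_i$; each one satisfies $\omega_j(D) = \sum_i c_i^{(j)} \Lambda_i = 0$.

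Since $A$ and $B$ are coprime, the $\CC(X,Y)$-module of rational 1-forms annihilating $D$ is free of rank one, generated by $\omega_D := B\,dX - A\,dY$, so I can write $\omega_j = h_j\, \omega_D$ and set $R := h_2/h_1$. Then $\omega_2 = R\omega_1$, and the closedness conditions $d\omega_1 = d\omega_2 = 0$ force $dR \wedge \omega_1 = 0$; hence $dR$ is proportional to $\omega_1$, and pairing with $D$ gives $D(R) = 0$. Linear independence of $c^{(1)}, c^{(2)}$ rules out $R$ being constant, so $R$ is a non-constant rational first integral of $D$. To reach the stated conclusion with integer exponents, I would take $R = P/Q$ minimal and use the classical Darboux-theoretic fact that every irreducible invariant algebraic curve is an irreducible factor of a unique pencil member $\lambda P - \mu Q$. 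Because $D(\lambda P - \mu Q) = \Lambda \cdot (\lambda P - \mu Q)$ for the fixed cofactor $\Lambda = D(P)/P$, each reducible pencil member $\lambda P - \mu Q = \prod_\ell g_\ell^{e_\ell}$ yields the $\ZZ$-linear identity $\sum_\ell e_\ell \Lambda_{g_\ell} = \Lambda$ among cofactors. A bookkeeping argument then confines all cofactors of irreducible Darboux polynomials to a finitely generated $\QQ$-subspace of $\CC[X,Y]_{\leq k-1}$; once this rational dimension is strictly less than $N$, the $\Lambda_i$ become $\QQ$-linearly dependent, and clearing denominators yields the required integers $n_i$.

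The main obstacle is this final upgrade from a $\CC$-linear to a $\ZZ$-linear relation, which is precisely what distinguishes Jouanolou's theorem from the weaker Darboux-style result that produces only complex exponents. The upgrade hinges on the rigidity forced by the pencil: once a rational first integral is known to exist, the multiplicities $e_\ell$ appearing in the factorizations of the reducible pencil members are integers, and sufficiently many of these integer relations among cofactors compel the given $\Lambda_i$ to satisfy one as well.
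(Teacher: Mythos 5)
The paper does not prove this statement: it is quoted from Jouanolou \cite{Joua_Pfaff}, with pointers to the simplified proofs of Singer, Cerveau--Mattei and Dumortier--Llibre--Art\'es, so there is no in-paper argument to compare against. Judged on its own, the first half of your proposal is the standard and correct argument for the \emph{weak} conclusion: the cofactors lie in a space of dimension $k(k+1)/2$, two independent relations give two closed logarithmic forms $\omega_1,\omega_2$ annihilating $D$, the rank-one structure gives $\omega_2=R\,\omega_1$, closedness gives $D(R)=0$, and $R$ is non-constant because the forms $df_i/f_i$ attached to distinct irreducibles are $\CC$-linearly independent (a fact you use implicitly twice, to get $\omega_1\neq 0$ and to rule out $R$ constant). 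So you do obtain \emph{a} rational first integral.

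The genuine gap is the final upgrade, which is exactly the part the paper's main proof relies on: the theorem asserts that a product of the \emph{given} curves with \emph{integer} exponents is a first integral, i.e.\ that $\Lambda_1,\dots,\Lambda_N$ are $\QQ$-linearly dependent, and your bookkeeping does not deliver this. The integer identities $\sum_\ell e_\ell\Lambda_{g_\ell}=\Lambda$ attached to reducible pencil members involve \emph{all} irreducible factors $g_\ell$ of that member, not only those appearing among your $f_i$; so they do not confine the given $\Lambda_i$ to a $\QQ$-subspace of $\CC[X,Y]_{\leq k-1}$ of dimension $<N$. The best count these identities yield for the $\QQ$-span of all cofactors is $1+\rho(P,Q)$, and bounding $\rho$ is precisely what the present paper proves \emph{using} Jouanolou's theorem, so that route is both circular here and off by one even granting the known bounds. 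The standard way to close the gap (Singer's proof, and the proof of the sparse version, Theorem \ref{cheze-sparse}) is finer: writing $\omega_1=\psi(H)\,dH$ with $\psi\in\CC(T)$ and comparing residues/partial fractions, one shows that the support of \emph{every} relation $c\in W$ is a union of complete pencil fibers, with $c_i=r_b\,e_{b,\ell}$ on the fiber over $b$ and $\sum_b r_b=0$; hence $W$ is defined over $\QQ$, and $\dim_\CC W\geq 2$ forces at least two fibers $M_{b_1},M_{b_2}$ all of whose irreducible factors occur among the $f_i$, so that $M_{b_1}/M_{b_2}=\prod_i f_i^{n_i}$ with $n_i\in\ZZ$ is the desired first integral. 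Without this residue analysis, the passage from a $\CC$-linear to a $\ZZ$-linear relation is asserted rather than proved.
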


Several authors have given simplified proof of Jouanolou's theorem. M.~Singer proves this result in    \cite{Singer}.   A direct proof  of Jouanolou's result is also given in \cite{CerveauMattei} and in  \cite{Dumortier_Llibre_Artes}. \\
Darboux and Jouanolou theorems are improved in \cite{LZ1,LZ2}. The authors show that we get the same kind of result if we take into account the multiplicity of invariant algebraic curves. The multiplicity of an invariant algebraic curve is defined and studied in \cite{Christopher_Llibre_Pereira}. This notion of multiplicity does not  correspond to the multiplicity $e_{(\lambda:\mu),i}$ of the irreducible factors  of $\lambda f - \mu g$. \\

Darboux and Jouanolou's theorems  have been also studied in the sparse case, i.e. bounds are given in terms of a Newton polytope in \cite{ChezeSparse}. More precisely, the result is the following: 

\begin{Thm}\label{cheze-sparse}
If a derivation has at least $\mathcal{B}+2$ irreducible invariant algebraic curves, where $\mathcal{B}$ is defined as in Theorem \ref{Thm}, then  there exist integers $n_i \in \ZZ$ such that $\prod_i f_i^{n_i} \in \CC(X,Y)$ is a rational first integral.
\end{Thm}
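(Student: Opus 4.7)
The plan is to mimic the classical proof of Jouanolou's theorem (the dense version, which produces a rational first integral from $k(k+1)/2+2$ irreducible invariant curves) but with the dimension of the cofactor space replaced by the sparse quantity $\mathcal{B}$. Let $f_1,\dots,f_m$ be distinct irreducible invariant algebraic curves of $D$ with $m\geq \mathcal{B}+2$, and let $\Lambda_i\in\CC[X,Y]$ be the cofactors, so that $D(f_i)=\Lambda_i f_i$.

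The heart of the proof is the following Newton polygon bound on the cofactors: every $\Lambda_i$ is supported in $N_D\cap\NN^2$, and hence lies in a $\CC$-vector space $V$ of dimension $\mathcal{B}$. The idea is to rewrite
\begin{equation*}
\Lambda_i \;=\; \frac{A}{X}\cdot\frac{X\partial_X f_i}{f_i} \;+\; \frac{B}{Y}\cdot\frac{Y\partial_Y f_i}{f_i},
\end{equation*}
and then, after multiplying by a generic $(x,y)$ in the two summands, to perform an initial-form (tropical) analysis at each edge of $N_D$. The factors $X\partial_X f_i/f_i$ and $Y\partial_Y f_i/f_i$ are logarithmic derivatives whose supports project favorably onto each weight direction (the supporting vertex of $f_i$ cancels in the logarithmic combination), so the initial form of $\Lambda_i$ in any direction $w$ is forced to come from the initial form of $x\,A/X+y\,B/Y$. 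Genericity of $(x,y)$ ensures no accidental cancellations in $N_D$. I expect this to be the main obstacle: handling edges, vertices, and the case where $A$ or $B$ vanishes on a coordinate axis requires case analysis, and the tropical comparison must be carried out carefully enough that the whole support (not just a single leading term) of $\Lambda_i$ is constrained.

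Once the cofactor bound is established, the rest follows the standard Darboux--Jouanolou route. Since $\dim V=\mathcal{B}$ and we have $m\geq\mathcal{B}+2$ cofactors, the linear map $\CC^m\to V$, $(c_1,\dots,c_m)\mapsto\sum_i c_i\Lambda_i$, has a kernel $K$ of dimension at least $2$. Every $(c_1,\dots,c_m)\in K$ produces a formal Darboux first integral $\prod_i f_i^{c_i}$, and equivalently a closed logarithmic $1$-form
\begin{equation*}
\omega_{(c_1,\dots,c_m)} \;=\; \sum_{i=1}^m c_i\,\frac{df_i}{f_i}
\end{equation*}
which is annihilated by wedging against $A\,dY-B\,dX$.

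To pass from two independent Darboux relations to a rational first integral, I would pick two linearly independent elements of $K$ yielding closed $1$-forms $\omega_1,\omega_2$. Both lie in the same rank-one $\CC(X,Y)$-submodule of rational $1$-forms annihilated by the dual of $D$, so $\omega_1=h\,\omega_2$ for some $h\in\CC(X,Y)$; since $\omega_1,\omega_2$ are closed, $dh\wedge\omega_2=0$, forcing $h$ to be a rational first integral of $D$. Finally, after clearing denominators in the dependency relation, one can choose the coefficients $c_i$ of the resulting relation to be integers (working in the $\QQ$-structure on $K$ obtained by picking a $\QQ$-basis of $V$), so that $h$ takes the form $\prod_i f_i^{n_i}$ with $n_i\in\ZZ$, as required.
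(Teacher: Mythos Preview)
The paper does not prove Theorem~\ref{cheze-sparse}: it is quoted from \cite{ChezeSparse}, and the only comment offered is that ``the key point in the proof of this theorem is the study of the Newton polygon of the cofactors for a given derivation.'' Your plan --- show that every cofactor $\Lambda_i$ is supported in $N_D\cap\NN^2$, hence lies in a $\CC$-vector space of dimension $\mathcal{B}$, and then run the classical Darboux--Jouanolou linear-algebra/closed-$1$-form argument --- is exactly this route, so at the level of strategy there is nothing to compare against.

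Two comments on the execution. The support bound on the cofactors is indeed the crux, and your initial-form sketch points in the right direction; making it rigorous (handling edges, vertices, and possible divisibility of $A$ or $B$ by a variable) is precisely the content of \cite{ChezeSparse}. More seriously, your last paragraph contains a real gap. The ``$\QQ$-structure on $K$'' you invoke does not exist: the map $(c_i)\mapsto\sum_i c_i\Lambda_i$ is $\CC$-linear but is not defined over $\QQ$, because the cofactors $\Lambda_i$ have complex coefficients in general, and a $2$-dimensional $\CC$-subspace of $\CC^m$ need not meet $\QQ^m$ nontrivially. Moreover, the rational function $h$ you obtain from $\omega_1=h\,\omega_2$ is a ratio of components of two logarithmic $1$-forms and is not a priori of the shape $\prod_i f_i^{n_i}$. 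The passage from ``$D$ has some rational first integral'' to ``$D$ has one of the form $\prod_i f_i^{n_i}$ with $n_i\in\ZZ$'' --- which is exactly the form used in the proof of Theorem~\ref{Thm} --- requires an additional argument (for instance, comparing the integer residues of $dH/H$, for a rational first integral $H$, with those of one of your $\omega_j$ along the curves $f_i=0$), not the one-line ``clearing denominators'' you propose.
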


The key point in the proof of this theorem is the study of the Newton polygon of the cofactors  for a given derivation, for other results about the structure of the cofactors see \cite{FerragutGasull}.


\section{Indecomposable rational functions and remarkable values}\label{sec:indecomp}
\begin{Def}
We say that a rational  function $f/g \in \CC(X,Y)$   is \emph{decomposable} if we have $f/g=u(h)$  where $u (T)\in \CC(T)$  with $\deg(u) \geq 2$ and $h(X,Y) \in \CC(X,Y)$, otherwise $f/g$  is said to be \emph{indecomposable}.\\
\end{Def}

Sometimes indecomposable rational functions are called minimal rational functions.
 
\begin{Rem}\label{remhomo}When $f/g=u(h)$ with $h$ indecomposable, we can suppose that \mbox{$h=h_1/h_2$} with $h_1$ and $h_2$ irreducible in $\CC[X,Y]$ and $\deg(h_1)=\deg(h_2)$. \\
Indeed, let \mbox{$(\lambda_1:\mu_1)$}, $(\lambda_2:\mu_2) \not \in \sigma(h_1,h_2)$,   and set $H_i=\lambda_i h_1 - \mu_i h_2$. Then, we have  \mbox{$\deg(H_1)=\deg(H_2)=\deg(h)$}, $H_1$, $H_2$ are irreducible in $\CC[X,Y]$, and $f=u\big(v(H_1/H_2)\big)$, where $v \in \CC(T)$ is an homography.
\end{Rem}

The following proposition explains why  indecomposable functions are important in our situation.
\begin{Prop}\label{prop:ker}
 If we denote by $\ker D$ the set of rational first integrals of $D$ we have $\ker D= \CC(f/g)$ where  $f/g\in \CC(X,Y)$ is indecomposable, or $\ker D= \CC$.
\end{Prop}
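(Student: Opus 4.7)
The plan is to recognize $\ker D$ as a subfield of $\CC(X,Y)$ containing $\CC$, determine its transcendence degree, and then extract a generator that is forced to be indecomposable.

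First, I would bound $\trdeg_\CC \ker D$ by one via a Jacobian argument. If $F,G\in\ker D$ were algebraically independent over $\CC$, then in characteristic zero the Jacobian $\partial_X F\,\partial_Y G-\partial_Y F\,\partial_X G$ would be a nonzero element of $\CC(X,Y)$. On the other hand, $D(F)=D(G)=0$ says exactly that the nonzero vector $(A,B)$ lies in the kernel of the matrix $\begin{pmatrix}\partial_X F&\partial_Y F\\ \partial_X G&\partial_Y G\end{pmatrix}$, forcing its determinant to vanish, a contradiction. If the transcendence degree is $0$, then every element of $\ker D$ is algebraic over $\CC$, hence $\ker D=\CC$ and we are done.

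Otherwise $\trdeg_\CC\ker D=1$, so $\ker D$ is a transcendence-degree-one subfield of $\CC(X,Y)$. By the multivariate version of L\"uroth's theorem (such a subfield over an algebraically closed field of characteristic zero is purely transcendental), there exists $H\in\CC(X,Y)$ with $\ker D=\CC(H)$. Writing $H=f/g$ with $f,g$ coprime, I would then argue that any such generator is automatically indecomposable. Suppose instead $H=u(h)$ with $u\in\CC(T)$, $\deg u\ge 2$, and $h\in\CC(X,Y)$. Applying $D$ and the chain rule gives $0=D(H)=u'(h)\,D(h)$; since $u$ is non-constant $u'\neq 0$ in $\CC(T)$, and $h$ must be transcendental over $\CC$ (otherwise $H$ would be constant), so $u'(h)\neq 0$ in $\CC(X,Y)$. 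Hence $D(h)=0$, i.e.\ $h\in\ker D=\CC(H)$, so $h=v(H)$ for some $v\in\CC(T)$. Substituting yields $H=u(v(H))$, forcing $u\circ v$ to be the identity on $\CC(T)$, which contradicts $\deg u\ge 2$.

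The main obstacle I expect is the invocation of the multivariate L\"uroth theorem: it is classical but is the only non-elementary ingredient. One could try to avoid it by taking $h\in\ker D$ of minimum degree and using the chain-rule computation above to show $h$ is indecomposable directly, but the inclusion $\ker D\subseteq\CC(h)$ ultimately rests on the same purely-transcendental fact, so there is no genuine shortcut.
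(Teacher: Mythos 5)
Your proof is correct, but it takes a genuinely different route from the one in the paper. The paper argues dynamically: it first reduces to the case where the first integral is $f/g$ with $f$, $g$ irreducible of the same degree (Remark \ref{remhomo}), takes such an $f/g$ of minimal degree, and then uses Singer's lemma (Lemma \ref{Lemsing}) --- through a nonsingular point of the vector field passes essentially one irreducible invariant algebraic curve --- to force any other indecomposable first integral $H_1/H_2$ to satisfy $H_i=\alpha_i f-\beta_i g$, whence $\ker D=\CC(f/g)$. You instead treat $\ker D$ as a subfield of $\CC(X,Y)$ containing $\CC$, bound its transcendence degree by one via the Jacobian criterion for algebraic dependence (valid in characteristic zero, with $(A,B)$ a nonzero vector annihilated by the Jacobian matrix), and then invoke the L\"uroth--Gordan theorem (a transcendence-degree-one subfield of $\CC(X,Y)$ containing $\CC$ is purely transcendental; this is the case that is actually a theorem, unlike the higher-dimensional L\"uroth problem) to obtain a generator $H$, whose indecomposability you deduce correctly from the chain rule and the multiplicativity of degree under composition of rational maps of $\PP^1$. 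This is exactly the alternative the paper alludes to when it remarks that ``another proof using L\"uroth's theorem is possible.'' The paper's argument stays inside the invariant-curve toolkit already needed for Theorem \ref{Thm} and avoids any algebraic-geometry input beyond Singer's lemma; yours is shorter and gives the dichotomy ($\ker D=\CC$ versus $\ker D=\CC(H)$) as a clean transcendence-degree statement, at the cost of the one non-elementary ingredient you yourself flag.
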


A consequence of this proposition is that  any two indecomposable rational first integrals are
equal, up to an homography.\\

In the situation of a polynomial first integral, we can find a proof of this proposition in \cite[Corollary 18]{FerragutLlibre}. However
we have not found a reference for this result in the situation of  a rational first integral. Thus we give a proof  based on the following lemma, see \cite[Lemma A.1]{Singer}. This lemma means that at a nonsingular point of a vector field there is at most one algebraic solution. Another proof of Proposition \ref{prop:ker} using L\"uroth's theorem is posssible. \\

\begin{Lem}\label{Lemsing}
Let $D=A(X,Y) \partial_X + B(X,Y)\partial_Y$ be a derivation and let $f_1$, $f_2$ be two invariant algebraic curves. Suppose that $(x_0,y_0)$ is a nonsingular point of $D$, i.e. $A(x_0,y_0)\neq 0$ or $B(x_0,y_0) \neq 0$. If $f_1(x_0,y_0)=f_2(x_0,y_0)=0$ and $f_1$ is irreducible then $f_1$ divides $f_2$.
\end{Lem}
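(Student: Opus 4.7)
The plan is to combine the local existence/uniqueness theorem for ODEs at a nonsingular point with a Bezout-type finiteness argument. The geometric picture is that through a nonsingular point of $D$ there passes a unique local analytic orbit, and any invariant algebraic curve vanishing at that point must vanish along the whole orbit; two coprime algebraic curves cannot share a one-dimensional common locus, so $f_1$ must divide $f_2$.

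First I would translate the invariance $D(f_i) = \Lambda_i f_i$ into a statement about the flow of $D$. Since $(x_0,y_0)$ is nonsingular, say $A(x_0,y_0)\neq 0$, there exists a unique local analytic solution $t\mapsto (x(t),y(t))$ of the system $x'=A(x,y)$, $y'=B(x,y)$ with $(x(0),y(0))=(x_0,y_0)$; note $(x'(0),y'(0))\neq(0,0)$, so this traces out a genuine one-dimensional analytic arc. Setting $\varphi_i(t) := f_i(x(t),y(t))$, a direct chain rule computation gives $\varphi_i'(t) = D(f_i)(x(t),y(t)) = \Lambda_i(x(t),y(t))\,\varphi_i(t)$, a linear homogeneous ODE with initial value $\varphi_i(0) = f_i(x_0,y_0) = 0$. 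By uniqueness, $\varphi_i \equiv 0$ on a neighborhood of $0$, so both $f_1$ and $f_2$ vanish identically along the local orbit $\Gamma$.

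Now I would invoke the algebraic dichotomy. Suppose, for contradiction, that $f_1 \nmid f_2$. Since $f_1$ is irreducible in $\CC[X,Y]$, this means $\gcd(f_1,f_2) = 1$, so by Bezout's theorem (or the basic fact that two coprime bivariate polynomials have only finitely many common zeros in $\CC^2$) the set $\{f_1=0\}\cap\{f_2=0\}$ is finite. But this intersection contains the arc $\Gamma$, which is infinite (indeed one-dimensional) since $(x'(0),y'(0))\neq 0$. This contradiction forces $f_1 \mid f_2$.

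The only subtle point is checking that $\Gamma$ really is infinite rather than a constant arc — but that follows immediately from $(x'(0),y'(0))=(A(x_0,y_0),B(x_0,y_0))\neq (0,0)$, which is exactly the nonsingularity hypothesis. Apart from that, the argument is essentially the ODE uniqueness theorem paired with Bezout, so I do not anticipate any real obstacle.
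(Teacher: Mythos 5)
Your argument is correct. The paper itself does not prove Lemma \ref{Lemsing}; it simply cites \cite[Lemma A.1]{Singer}, so there is no in-text proof to compare against. Your proof is the standard one underlying that reference: at a nonsingular point the (complex-coefficient) system $x'=A(x,y)$, $y'=B(x,y)$ has a unique local solution arc $\Gamma$, the chain rule turns each invariance relation $D(f_i)=\Lambda_i f_i$ into the scalar linear ODE $\varphi_i'=\Lambda_i(x(t),y(t))\varphi_i$ with $\varphi_i(0)=0$, whence $f_1$ and $f_2$ both vanish on the infinite set $\Gamma$, and coprimality of $f_1$ and $f_2$ would force their common zero locus to be finite. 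All the steps check out, including the two points one has to be slightly careful about: the arc is nonconstant precisely because $(A(x_0,y_0),B(x_0,y_0))\neq(0,0)$, and irreducibility of $f_1$ converts $f_1\nmid f_2$ into $\gcd(f_1,f_2)=1$ so that the finiteness argument applies. (The degenerate cases are also harmless: $f_2$ cannot be a nonzero constant since it vanishes at $(x_0,y_0)$, and if $f_2=0$ the conclusion is trivial.) So your proposal supplies a correct, self-contained proof of a statement the paper handles by citation.
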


\begin{proof}[Proof of Proposition \ref{prop:ker}]
If $\ker D \neq \CC$ then there exists $f/g \in \CC(X,Y) \setminus \CC$ such that $D(f/g)=0$. We can suppose that $f/g$ is indecomposable. Indeed, if $f/g=u(h)$, with $\deg(u) \geq 2$ and $h$ indecomposable, then we have $0=D(f/g)=u'(h)D(h)$. As  $\deg(u) \geq 2$ we deduce   $u'(h)\neq0$ and $D(h)=0$. Thus $h$ is indecomposable and $h \in \ker D$. \\
Now, as $f/g$ is indecomposable,  we can suppose, thanks to Remark~\ref{remhomo} and the previous computation,  that $f$ and $g$  are irreducible in $\CC[X,Y]$ with $\deg(f)=\deg(g)$.\\
Then, we can assume that $f/g$  is a rational function with minimal degree in the set of all indecomposable rational functions $F/G \in \ker D$ such that $F$ and $G$ are irreducible in $\CC[X,Y]$ with $\deg(F)=\deg(G)$. \\

Now, let $F_1/F_2 \in \ker D$, and  set $F_1/F_2=U(H_1/H_2)$ where $H_1/H_2$ is indecomposable. As before, we can show that $D(H_1/H_2)=0$ and
 suppose, thanks to Remark \ref{remhomo}, that $H_1$ and $H_2$ are irreducible in $\CC[X,Y]$, and $\deg(H_1)= \deg(H_2)$. \\
Now we claim:
$$(\star) \textrm{   there exist } \alpha_i,\beta_i \in \CC \textrm{ such that } H_i=\alpha_i f-\beta_ig.$$ 
We  prove the claim:
As the number of singular points  of $D$ is  finite we can consider $(x_i,y_i) \in \CC^2$, where $i=1, 2$,  are nonsingular points of $D$ such that \mbox{$H_i(x_i,y_i)=0$}. Then we apply Lemma \ref{Lemsing}, to $H_i$ and $g(x_i,y_i) f(X,Y)-f(x_i,y_i)g(X,Y)$ and we deduce that $H_i$ divides $g(x_i,y_i) f(X,Y)-f(x_i,y_i)g(X,Y)$. As we have supposed  the degree of $f/g$ minimal, this proves the claim.


Now, thanks to our claim, we get $H_1/H_2=V(f/g)$ where $V \in \CC(T)$ is an homography. Thus  $F_1/F_2=U\big(V(f/g)\big)$. It follows $\ker D = \CC(f/g)$.

\end{proof}

It follows that indecomposable first integrals correspond to  first integrals with  minimal degree.




\section{Proof of Theorem \ref{Thm}}\label{sec:proof}
As mentioned before, the strategy will be the following: first construct a rational first integral with  factors some of the $f_{(\lambda,\mu),i}$ and then get a contradiction.
\begin{proof}[Proof of Theorem \ref{Thm}]
Suppose that $\rho(f,g) \geq \mathcal{B}+2$, we are going to prove that this situation is absurd.\\
We denote by $s$ the number of remarkable values in $\sigma(f,g)\setminus \gamma(f,g)$. Furthermore, we use  the following convention: if $Z$ divides $f_{(\lambda_i:\mu_i)}^h$ then we set $f_{(\lambda_i:\mu_i), n(\lambda_i,\mu_i)}^h=Z$. Then,  $f_{(\lambda_1,\mu_1),1}$, \ldots, $f_{(\lambda_1,\mu_1),n(\lambda_1:\mu_1)-1}$, \ldots, $f_{(\lambda_s,\mu_s),1}$, \ldots, $f_{(\lambda_s,\mu_s),n(\lambda_s:\mu_s)-1}$ are non-trivial invariant algebraic curves by Proposition \ref{fact-inv}.\\
As $\sum_{i=1}^s\big( n(\lambda_i:\mu_i)-1 \big)=\rho(f,g) \geq \mathcal{B}+2$, Jouanolou's theorem in the sparse case, see Theorem \ref{cheze-sparse}, implies that these curves give a rational first integral. That is to say, there exist integers $c_{(\lambda_1:\mu_1),1}$, \ldots, $c_{(\lambda_s:\mu_s),n(\lambda_s:\mu_s)-1}$ such that 
$$\prod_{j=1}^s  \, \prod_{i=1}^{n(\lambda_j:\mu_j)-1} f_{(\lambda_j:\mu_j),i}^{c_{(\lambda_j:\mu_j),i}} \in \ker D.$$
By Proposition \ref{prop:ker}, we get 
$$\prod_{j=1}^s  \, \prod_{i=1}^{n(\lambda_j:\mu_j)-1} f_{(\lambda_j:\mu_j),i}^{c_{(\lambda_j:\mu_j),i}} =u\Big(\dfrac{f}{g}\Big),$$
where $u \in \CC(T)$. Furthermore, $u(f/g)$ is a rational function of this form:
$$u\Big(\dfrac{f}{g}\Big)=\dfrac{\prod_l (\alpha_l f- \beta_l g)}{\prod_m (\gamma_m f- \delta_m g)}.g^e,$$
where $\alpha_l, \beta_l, \gamma_m, \delta_m \in \CC$ and $e \in \ZZ$. Thus we get the equality
$$(\ast) \quad \prod_{j=1}^s  \, \prod_{i=1}^{n(\lambda_j:\mu_j)-1} f_{(\lambda_j:\mu_j),i}^{c_{(\lambda_j:\mu_j),i}} =\dfrac{\prod_l (\alpha_l f- \beta_l g)}{\prod_m (\gamma_m f- \delta_m g)}.g^e.$$
As the factorization into irreducible factors in $\CC[X,Y]$ is unique, we deduce that  $(\alpha_l: \beta_l)$ and  $ (\gamma_m:\delta_m)$ belong to the spectrum $\sigma(f,g)$. Witout loss of generality we can suppose that $(\lambda_1,\mu_1)=(\alpha_1,\beta_1)$. Then we deduce that equality  $(\ast)$ is impossible since the factor $f_{(\lambda_1:\mu_1),n(\lambda_1:\mu_1)}$ does not appear in the left hand side of the equality but must appear in the right hand side. This concludes  the proof.
\end{proof}


\section*{Acknowledgment}
The author thanks Thomas Cluzeau, Jacques-Arthur Weil and Antoni Ferragut for their precious comments during the preparation of this text. Furthermore, the author has greatly appreciate the comments of the anonymous referee. These comments have allow to improve the submitted paper.


 \bibliographystyle{plain}
\bibliography{biblio-darboux-spectre}


\end{document}